\DeclareMathOperator{\dom}{dom}
\newtheorem{theorem}{Theorem}[section]
\newtheorem{lemma}[theorem]{Lemma}
\newtheorem{remark}[theorem]{Remark}
\newtheorem{question}[theorem]{Question}
\numberwithin{equation}{section}
\def\rmark{\mbox{$\rm\bf\rule{0.06em}{1.45ex}\kern-0.05em R$}}
\def\pmark{\mbox{$\rm\bf\rule{0.06em}{1.45ex}\kern-0.05em P$}}
\def\nmark{\mbox{$\rm\bf\rule{0.06em}{1.45ex}\kern-0.05em N$}}
\def\vdash{\mbox{$\rm\| \kern-0.13em -$}}
\newcommand{\lusim}[1]{\smash{\underset{\raisebox{1.2pt}[0cm][0cm]{$\sim$}}
{{#1}}}}
\begin{document}

\title[More on almost Souslin Kurepa trees]{More on almost Souslin Kurepa trees}

\author[M. Golshani]{Mohammad
 Golshani}

\thanks{This research was in part supported by a grant from IPM (No. 91030417). The author also would like to thank the referee of the paper for some useful remarks and comments.} \maketitle




\begin{abstract}
It is consistent that there exists a Souslin tree $T$ such that after forcing with it, $T$ becomes an almost Souslin Kurepa tree. This answers a question of Zakrzewski \cite{z}.
\end{abstract}

\maketitle


\section{Introduction}

In this paper we continue our  study of $\omega_1$-trees started in \cite{gol} and
prove another consistency result concerning them. Let $T$ be a
normal $\omega_1$-tree. Let's recall that:

\begin{itemize}
 \item $T$
is a Kurepa tree if it has at least $\omega_2$-many branches.
\item $T$ is a Souslin tree if it has no uncountable antichains
(and hence no branches). \item $T$ is an almost Souslin tree if
for any antichain $X \subseteq T,$ the set $S_X = \{ht(x):x \in X
\}$ is not stationary (see \cite{ds}, \cite{z}).
\end{itemize}

We refer to \cite{gol} and \cite{j1} for  historical information and more details on trees.

In \cite{z}, Zakrzewski asked some questions concerning the existence of almost Souslin Kurepa trees. In \cite{gol} we answered two of these questions but one of them remained open:

\begin{question}
Does there exist a Souslin tree T such that for each
$G$ which is $T$-generic over $V, T$ is an almost Souslin
Kurepa tree in $V[G]$?
\end{question}

In this paper we give an affirmative answer to this question.
\begin{theorem}
It is consistent that there exists a Souslin tree T such that for each
$G$ which is $T$-generic over $V, T$ is an almost Souslin
Kurepa tree in $V[G].$
\end{theorem}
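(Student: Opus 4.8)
The plan is to work in a model of $\diamondsuit^{+}$ (for instance $V=L$), which supplies $2^{\omega_1}=\omega_2$, a strong guessing sequence on $H(\omega_2)$, and enough room to build a Kurepa family and a Souslin tree together by a single recursion on levels. The construction produces a downward closed normal subtree $T\subseteq 2^{<\omega_1}$ with countable levels $T_\alpha=T\cap 2^\alpha$, built simultaneously with a coherent sequence of countable subgroups $E_\alpha\le(2^\alpha,\oplus)$, where $\oplus$ is coordinatewise addition mod $2$. The structural invariant maintained throughout is that each $T_\alpha$ is a union of $E_\alpha$-cosets; equivalently, writing $\sigma_e(x)=x\oplus(e\restriction\mathrm{dom}(x))$ for the induced tree automorphism of $2^{<\omega_1}$, the tree $T$ is invariant under $\sigma_e$ for every $e$ in the inverse limit $E=\{e\in 2^{\omega_1}:e\restriction\alpha\in E_\alpha\text{ for all }\alpha\}$. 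Here $E$ is arranged to be a \emph{Kurepa family that is also a subgroup}: a subgroup of $(2^{\omega_1},\oplus)$ with countable projections $E_\alpha$ but $\omega_2$ distinct members. Closing each $E_\alpha$ under $\oplus$ is cheap (it preserves countability), and the $\omega_2$ cofinal members are exactly what the $\diamondsuit^{+}$-style Kurepa construction delivers.

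With this in place, Kurepa-ness after forcing is almost immediate. Forcing with the Souslin tree $T$ is ccc, hence preserves all cardinals; it adjoins a generic cofinal branch $b$ while the underlying tree (its levels and its height $\omega_1$) is unchanged, so in $V[b]$ the object $T$ is still a normal $\omega_1$-tree. For each $e\in E$ we have $(b\oplus e)\restriction\alpha=(b\restriction\alpha)\oplus(e\restriction\alpha)\in T_\alpha$ by $E_\alpha$-invariance, so $b\oplus e=\sigma_e[b]$ is again a cofinal branch of $T$, and distinct $e$ give distinct branches. As $|E|=\omega_2$, this exhibits $\omega_2$ branches of $T$ in $V[b]$, so $T$ is Kurepa there. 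There is no clash with $T$ being Souslin in $V$: the branches $b\oplus e$ exist only after $b$ is adjoined, and in $V$ the tree $T$ has no cofinal branch whatsoever.

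The Souslin-ness of $T$ in $V$ is secured by the usual $\diamondsuit$-sealing of maximal antichains at club-many limits, which coexists with the homogeneity supplied by the $\sigma_e$ because each $\sigma_e$ carries antichains to antichains. At a limit stage $\alpha$ one first takes $E_\alpha$ to be the natural limit of the $E_\beta$ $(\beta<\alpha)$, then chooses which $E_\alpha$-cosets of cofinal branches of $T\restriction\alpha$ to crown with nodes at level $\alpha$, so as to (i) keep every node of $T\restriction\alpha$ extendible (normality), (ii) kill the maximal antichain guessed by $\diamondsuit$, and (iii) leave $\omega_2$-many coset-branches available to continue $E$.

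The substance of the argument, and the part that answers Zakrzewski's question, is guaranteeing that $T$ is \emph{almost} Souslin in $V[b]$: every antichain $X\subseteq T$ in $V[b]$ must have $S_X=\{\mathrm{ht}(x):x\in X\}$ nonstationary. Since a Kurepa tree is never Souslin, $V[b]$ genuinely contains uncountable antichains, so this property cannot be inherited from $V$ and must be engineered. The plan is to fix by $\diamondsuit$ a sequence guessing, at club-many $\alpha$, a $T\restriction\alpha$-name $\dot X_\alpha$ for an antichain, and at stage $\alpha$ to place the nodes of $T_\alpha$ so as to \emph{seal} $\dot X_\alpha$: each node at level $\alpha$ should sit below some condition forcing that $\dot X_\alpha$ gains no further element of height $\ge\alpha$; $E$-invariance is used to make this sealing uniform across each orbit. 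If the sealing succeeds on a club $C$, then any $V[b]$-antichain whose name is guessed correctly on a stationary set has $S_X$ disjoint from a tail of $C$, hence nonstationary. The main obstacle is precisely this simultaneous sealing: at every limit stage one must interleave four demands — normality, the Souslin sealing of $V$-antichains, the $E$-invariance yielding the $\omega_2$ post-forcing branches, and the new sealing of name-antichains — and verify they never collide, in particular that sealing every guessed name on a club neither destroys the branches $b\oplus e$ nor forces an uncountable $V$-antichain, and that the guessing is comprehensive enough to catch every antichain of stationary height. Pushing this bookkeeping through the $\omega_1$-recursion is where the real difficulty lies.
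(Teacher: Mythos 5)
Your outline leaves unproven exactly the property that answers Zakrzewski's question --- that $T$ is almost Souslin in $V[b]$ --- and you concede this yourself (``pushing this bookkeeping through the $\omega_1$-recursion is where the real difficulty lies''). Worse, the sealing mechanism you propose cannot work as stated. Only uncountable antichains matter (a countable antichain of an $\omega_1$-tree has bounded, hence nonstationary, height set), and for a name $\dot X$ forced to be an uncountable antichain \emph{no} condition can force ``$\dot X$ gains no further element of height $\ge\alpha$'': that would confine an uncountable antichain to the countable set $T\upharpoonright\alpha$. So the sealing requirement is unsatisfiable for precisely the names you need to defeat, and your final inference (``$S_X$ disjoint from a tail of $C$'') rests on this impossible strong form. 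The natural repair --- seal so that each node of $T_\alpha$ forces merely $\alpha\notin S_{\dot X}$ --- hits two further obstacles you do not address: (i) plain $\diamondsuit$ guessing is correct only stationarily often, whereas nonstationarity of $S_X$ needs a \emph{club} avoiding $S_X$ (the complement of $S_X$ being stationary is not enough; $\diamondsuit^+$ could give club-many correct guesses, but then you must seal all countably many guessed names at each level simultaneously, which is exactly the unverified bookkeeping); and (ii) whether a condition forces $y\notin\dot X$ depends on the part of the name lying \emph{above} level $\alpha$, and the adversary's name is chosen after your tree is complete, so it can place the conditions that put elements into $\dot X$ at levels your stage-$\alpha$ sealing never sees. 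A level-by-level recursion has no evident way to control this, and you offer none.

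For contrast, the paper never guesses names. It forces (with Fuchs's poset) a Souslin tree together with $\omega_2$ automorphisms $\pi_i$ of $T\upharpoonright Lim$ --- your translation group $E$ is the analogue of this, and that half of your plan, Kurepa-ness of $T$ in $V[b]$, is fine. It then isolates the stationary set $S$ of levels at which the generic construction is ``continuous,'' shoots a club $C\subseteq S$ with countable conditions, and proves that this second forcing preserves Souslin-ness of $T$ (Lemma 2.5, a step with no analogue in your sketch). The payoff is a \emph{structural} property of the tree: for each $\alpha\in C$ there is a map $g_\alpha$ on $T_\alpha$ with $g_\alpha(x)<_T x$, coherent across levels via the automorphisms. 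In $V^*[b]$ an arbitrary antichain $X$ with $S_X$ stationary is then killed by pressing down ($\alpha\mapsto ht(g_\alpha(x_\alpha))$ is regressive on $S_X\cap C$), producing two comparable elements of $X$ --- a contradiction that works uniformly for all antichains, with no name guessing and no circularity. To rescue your $\diamondsuit^+$ approach you would need to build an analogous system of coherent regressive maps into the recursion (plausible, and much closer in spirit to the paper than your sealing idea), or else invent a sealing notion that controls names whose supports lie above the sealing level; the latter is precisely the missing idea.
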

The rest of this paper is devoted to the proof of this theorem. Our proof is motivated by \cite{f} and \cite{gol}.
\section{Proof of Theorem 1.2}
Let $V$ be a model of $ZFC+GCH.$ Working in $V$ we define a forcing notion which adds a Souslin tree which is almost Kurepa, in the sense that $T$ becomes a Kurepa tree in its generic extension. The forcing notion is essentially the forcing notion introduced in \cite{f} and we will recall it here for our later usage.
Conditions $p$ in $\mathbb{S}$ are of the form $\langle t, \langle \pi_{\alpha}: \alpha \in I  \rangle   \rangle,$ where we write $t=t_p, I=I_p$ and $\langle \pi_{\alpha}: \alpha \in I  \rangle=\vec{\pi}^p$ such that:
\begin{enumerate}
\item $t$ is a normal $\omega$-splitting tree of countable height $\eta$, where $\eta$ is either a limit of limit ordinals or the successor of a limit ordinal. We denote $\eta$ by $\eta_p.$
\item $I$ is a countable subset of $\omega_2.$
\item Every $\pi_{\alpha}$ is an automorphism of $t\upharpoonright Lim,$ where $Lim$ is the set of countable limit ordinals and $t\upharpoonright Lim$ is obtained from $t$ by restricting its levels to $Lim.$

\end{enumerate}
The ordering is the natural one: $\langle s, \vec{\sigma} \rangle \leq  \langle t, \vec{\pi}   \rangle$ iff $s$ end extends $t, dom(\vec{\sigma}) \supseteq dom(\vec{\pi})$ and for all $\alpha \in dom(\vec{\pi}), \sigma_{\alpha} \upharpoonright t=\pi_{\alpha}.$
\begin{remark}
In \cite{f}, the conditions in $\mathbb{S}$ must satisfy an additional requirement that we do not impose here. This is needed in \cite{f} to ensure the generic $T$ is rigid. Its exclusion does not affect our proof, and in fact simplifies several details.
\end{remark}

Let
 \begin{center}
$\mathbb{P}=\{ p\in \mathbb{S}:$ for some $\alpha_p,  \eta_p =\alpha_p +1   \}.$
\end{center}
It is easily seen that $\mathbb{P}$ is dense in $\mathbb{S}.$
Let $G$ be $\mathbb{P}$-generic over $V$. Let
\begin{center}
$T=\bigcup \{t_p: p\in G    \}$
\end{center}
and for each $\alpha< \omega_2$ set
\begin{center}
$\pi_i= \bigcup \{\sigma_i: \exists u=\langle t, \vec{\sigma} \rangle \in G, i \in I_u   \}.$
\end{center}

 Then (see \cite{f}, Lemmas 2.3, 2.7, 2.9 and 2.14):
\begin{lemma}

$(a)$ $\mathbb{P}$ is $\omega_1$-closed and satisfies the $\omega_2$-$c.c.,$

$(b)$  $T= \langle \omega_1, <_T  \rangle$ is a Souslin tree.

$(c)$ Each $\pi_i$ is an automorphism of $T\upharpoonright Lim.$

$(d)$ If $b$ is a branch of $T,$ which is $T$-generic over $V[G],$ and if $b_i=\pi_{i}``b, i< \omega_2,$ then the $b_i$'s are distinct branches of $T$. In particular $T$ is almost Kurepa.
\end{lemma}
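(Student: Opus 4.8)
The four clauses are established much as in \cite{f}: I would handle (a)--(c) by the standard machinery for forcings that add a Souslin tree, and reserve the real work for the genericity argument behind (d). For the $\omega_1$-closure in (a), note that (using the density of $\mathbb{P}$ in $\mathbb{S}$ already recorded) it suffices to find lower bounds in $\mathbb{S}$: given a descending sequence $\langle p_n:n<\omega\rangle$, the union $t=\bigcup_n t_{p_n}$ is a normal $\omega$-splitting tree whose height $\eta=\sup_n\eta_{p_n}$ is a limit of limit ordinals, $I=\bigcup_n I_{p_n}$ is countable, and each $\pi_\alpha=\bigcup\{\pi_\alpha^{p_n}:\alpha\in I_{p_n}\}$ is an automorphism of $t\upharpoonright Lim$ because the tail maps cohere; thus $\langle t,\langle\pi_\alpha:\alpha\in I\rangle\rangle$ is an $\mathbb{S}$-lower bound, and density drops it into $\mathbb{P}$. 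For the $\omega_2$-c.c.\ I would run a $\Delta$-system argument under GCH: among $\omega_2$ conditions first thin to a fixed underlying tree $t$ (only $2^{\aleph_0}=\aleph_1$ possibilities), then apply the $\Delta$-system lemma to the index sets $I_p$ (legitimate since $|\alpha|^{\aleph_0}<\aleph_2$ for $\alpha<\omega_2$) to obtain a root $R$, and finally thin again so the countably many maps on $R$ agree (only $\aleph_1^{\aleph_0}=\aleph_1$ such tuples). Any two survivors then share $t$ and admit the obvious common extension with the union of the two automorphism families, so they are compatible.

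For (b), $\omega_1$-closure means no new $\omega$-sequences are added, and routine density makes $T$ a normal $\omega_1$-tree with countable levels; Souslinity reduces to showing every maximal antichain is countable. So suppose $p\Vdash\dot A$ is a maximal antichain. Using $\omega_1$-closure I would build a descending $\langle p_n\rangle$ with heights $\delta_n\nearrow\delta$ so that each $p_{n+1}$ decides ``$y\in\dot A$'' for every $y\in t_{p_n}$ and, exploiting maximality, forces some witness $z_y\in\dot A\cap t_{p_{n+1}}$ comparable to each such $y$. A lower bound $q$ with $\eta_q>\delta$ then forces $A_0=\{y\in t_q\upharpoonright\delta:q\Vdash y\in\dot A\}$ to be a maximal antichain of $t_q\upharpoonright\delta$. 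Normality forces every node of height $\ge\delta$ to be comparable with some element of $A_0$, so no further node can enter $\dot A$ without breaking the antichain; hence $q\Vdash\dot A\subseteq A_0$ is countable. The delicate point is arranging the witnesses so that $A_0$ is genuinely maximal in the initial segment, so that it reflects all higher nodes.

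For (c), each $\pi_i$ is an increasing union of coherent automorphisms $\pi_i^u$ ($u\in G$, $i\in I_u$); since $\{u\in\mathbb{P}:i\in I_u\}$ is dense, $\pi_i$ is total and surjective on $T\upharpoonright Lim=\bigcup_{u\in G}t_u\upharpoonright Lim$, hence an automorphism. Turning to (d), each $b_i=\pi_i``b$ is again a cofinal branch, since $\pi_i$ is an automorphism and so preserves heights and chains. To see the $b_i$ are distinct, fix $i\ne j$; it is enough to show that in $V[G]$ the set $D_{i,j}=\{y\in T\upharpoonright Lim:\pi_i(y)\ne\pi_j(y)\}$ is cofinal in $T$, for then the $T$-generic branch $b$ meets it at some $y$, and $\pi_i(y)\in b_i$, $\pi_j(y)\in b_j$ are distinct nodes of the same height, whence $b_i\ne b_j$.

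Cofinality of $D_{i,j}$ I would obtain from a density argument inside $\mathbb{P}$. Given $p$ and a node $x$, first extend so that $i,j\in I_p$ (adjoining, say, the identity map), then extend the tree above $x$ by freshly built levels containing two isomorphic cones and define the extensions of $\pi_i,\pi_j$ to act differently on them --- for instance let one fix the cones and the other swap them --- producing $p'\le p$ with a node above $x$ moved differently by the two maps. Genericity of $G$ then makes $D_{i,j}$ cofinal. As this holds for every pair $i\ne j<\omega_2$, the family $\{b_i:i<\omega_2\}$ consists of $\omega_2$ distinct branches of $T$ in $V[G][b]$; since forcing with the Souslin tree $T$ is ccc it preserves cardinals, so $T$ is Kurepa there, and as $b$ was an arbitrary $T$-generic branch, $T$ is almost Kurepa. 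I expect the main obstacle to be exactly this last density step: one must build the new levels with enough symmetry that two automorphisms extending the prescribed ones can be forced to disagree above a given node, and then confirm that this disagreement persists as an honest difference of $\pi_i$ and $\pi_j$ on the generic tree.
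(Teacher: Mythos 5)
The paper itself gives no proof of this lemma: it is quoted verbatim from Fuchs \cite{f} (his Lemmas 2.3, 2.7, 2.9 and 2.14), so your proposal has to be judged against the arguments of \cite{f}. Your reconstructions of (a), (c) and (d) are essentially those arguments and look sound: the union-plus-density argument for $\omega_1$-closure, the GCH $\Delta$-system argument for the $\omega_2$-c.c., and, for (d), the reduction to showing that for $i\neq j$ the set $D_{i,j}=\{y\in T\upharpoonright Lim:\pi_i(y)\neq\pi_j(y)\}$ is dense in the tree forcing, which follows from a density argument in $\mathbb{P}$ (adjoin $i,j$ with identity maps, then extend the tree to a new limit level and let the extension of $\pi_j$ swap two new nodes having the same $Lim$-predecessors while $\pi_i$ fixes them, extending the remaining automorphisms as usual).

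The genuine gap is in (b), at the sealing step. You claim that for \emph{any} lower bound $q$ with $\eta_q>\delta$, once $A_0=\{y\in t_q\upharpoonright\delta: q\Vdash y\in\dot{A}\}$ is a maximal antichain of $t_q\upharpoonright\delta$, ``normality forces every node of height $\geq\delta$ to be comparable with some element of $A_0$.'' That is false. A node at level $\delta$ of $t_q$ sits on top of a cofinal branch of $t_q\upharpoonright\delta$, and maximality of $A_0$ below $\delta$ does not make that branch pass \emph{above} any member of $A_0$: in $2^{<\omega}$ the set $\{0^n1:n<\omega\}$ is a maximal antichain, yet the all-zeros branch lies above none of its elements, so a level-$\omega$ node realized on top of that branch is incomparable with the entire antichain. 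Hence an arbitrary lower bound does not seal $\dot{A}$; worse, since $\dot{A}$ is forced to be maximal, such a $q$ forces new elements of $\dot{A}$ to appear above level $\delta$. The fix --- and this is the actual content of the cited Lemma 2.9 of \cite{f}, mirrored in this paper's own Lemma 2.5, whose condition $s$ is built so that ``every new node of the tree part of $s$ is above a condition in $a$'' --- is to \emph{construct} the lower bound so that every node placed at level $\delta$ lies above a member of $A_0$, and to do this compatibly with extending all the automorphisms: the family of branches realized at level $\delta$ must be closed under the countable group generated by the $\pi^q_\alpha$, $\alpha\in I_q$, with every image of every realized branch still passing above $A_0$ (one builds each branch generically, meeting for each $\sigma$ in that group the dense set of nodes whose $\sigma$-image lies above a member of $A_0$). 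Your ``delicate point'' gestures at a difficulty, but identifies it with maximality of $A_0$ in the initial segment, which the counterexample above shows is not the right condition, and it misses the interaction with the automorphisms entirely.
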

Let
$S=\{\alpha_p: p\in G, \alpha_p=\bigcup \{\alpha_q: q \in G, \alpha_q < \alpha_p  \}$ and $I_p=\bigcup \{I_q: q \in G, \alpha_q < \alpha_p  \}    \}.$
Then as in \cite{gol}, Lemma 2.4, we can prove the following:
\begin{lemma}
$S$ is a stationary subset of $\omega_1.$
\end{lemma}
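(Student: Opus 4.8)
The plan is to show, by a density argument in $V$, that it is forced that $S$ meets every club of $\omega_1$. So fix a $\mathbb{P}$-name $\dot{C}$ for a club subset of $\omega_1$ and an arbitrary condition $p \in \mathbb{P}$; it suffices to produce $q \leq p$ and a limit ordinal $\alpha^*$ with $q \Vdash \check{\alpha}^* \in \dot{S} \cap \dot{C}$, since then $\Vdash$``$\dot{S}\cap\dot{C}\neq\emptyset$''. Throughout I would use that $\mathbb{P}$ is $\omega_1$-closed (Lemma (a)), which preserves $\omega_1$ and, more importantly, lets me carry out a construction of length $\omega$ and take a lower bound of the resulting chain.

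First I would build in $V$ a descending sequence $\langle p_n : n<\omega\rangle$ below $p_0=p$, together with ordinals $\gamma_n$, by interleaving two tasks. Given $p_n$, since $\dot{C}$ is forced to be unbounded, I find $p_n'\le p_n$ and $\gamma_n>\alpha_{p_n}$ with $p_n'\Vdash\check{\gamma}_n\in\dot{C}$; then, using density of $\mathbb{P}$ in $\mathbb{S}$ together with $\omega_1$-closure, I extend $p_n'$ to $p_{n+1}$ with $\alpha_{p_{n+1}}>\gamma_n$. Put $\alpha^*=\sup_n\alpha_{p_n}=\sup_n\gamma_n$, a limit ordinal. The ordinals $\gamma_n$ lie in $C$ and are cofinal in $\alpha^*$, so by closure of the club any lower bound of the chain forces $\check{\alpha}^*\in\dot{C}$. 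By $\omega_1$-closure I then take $q$ to be a lower bound of $\langle p_n\rangle$ with $\alpha_q=\alpha^*$ and $I_q=\bigcup_n I_{p_n}$ (choosing the limit tree of height $\alpha^*+1$ and the union of the index sets as the index set of $q$).

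It remains to verify $q\Vdash\check{\alpha}^*\in\dot{S}$, with $q$ itself as the witness required by the definition of $S$. The height clause is immediate: each $p_n$ lies in $\dot{G}$ below $q$ with $\alpha_{p_n}<\alpha^*$, so $q\Vdash\alpha^*=\sup_n\alpha_{p_n}\le\sup\{\alpha_r:r\in\dot{G},\ \alpha_r<\alpha^*\}\le\alpha^*$, giving $q\Vdash\alpha^*=\bigcup\{\alpha_r:r\in\dot{G},\ \alpha_r<\alpha^*\}$. The index clause $q\Vdash I_q=\bigcup\{I_r:r\in\dot{G},\ \alpha_r<\alpha^*\}$ is where the real work lies. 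The inclusion $\subseteq$ is automatic, since each $I_{p_n}\subseteq I_q$ and $p_n\in\dot{G}$.

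The main obstacle is the reverse inclusion. A priori an extension of $q$ can introduce into $\dot{G}$ a condition $r$ of height $\alpha_r<\alpha^*$ carrying an index outside $I_q$ (extend $q$ by adjoining a fresh index and restrict), so nothing yet rules out new indices appearing cofinally below $\alpha^*$ off the chain; without care $\bigcup\{I_r:r\in\dot{G},\ \alpha_r<\alpha^*\}$ could strictly exceed $I_q$. I expect this to be the crux, and I would handle it exactly as in \cite{gol}, Lemma 2.4, by folding a bookkeeping into the construction: at stage $n$ one enumerates and absorbs, into $I_{p_{n+1}}$, all indices that can be forced below $\alpha_{p_n}$, using that at each countable stage only countably many indices are active and that any genuinely new index can be deferred to appear only at heights $\ge\alpha^*$. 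With this absorption built into the chain, the union $\bigcup_n I_{p_n}=I_q$ exhausts $\bigcup\{I_r:r\in\dot{G},\ \alpha_r<\alpha^*\}$, so $q$ forces the index clause as well, and hence $q\Vdash\check{\alpha}^*\in\dot{S}\cap\dot{C}$, which is what we wanted.
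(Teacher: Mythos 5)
Your overall skeleton --- interleave club-capture with height-increase along a descending $\omega$-chain, pass to a lower bound $q$ at $\alpha^*=\sup_n\alpha_{p_n}$, and argue $q\Vdash\check{\alpha}^*\in\dot{S}\cap\dot{C}$ --- is the standard argument and is surely what is intended here; note that the paper itself gives no proof at all of this lemma (it says only that it can be proved ``as in \cite{gol}, Lemma 2.4''), so there is nothing in-paper to compare against line by line. Your treatment of the club part, of the height clause, and of the inclusion $I_q\subseteq\bigcup\{I_r:r\in\dot{G},\ \alpha_r<\alpha^*\}$ is correct.

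The genuine gap is at the step you yourself call the crux, and your proposed repair cannot work. The maneuver you describe parenthetically --- ``extend $q$ by adjoining a fresh index and restrict'' --- is available below \emph{every} condition and for \emph{every} index $j<\omega_2$: given $q$ and $j\notin I_q$, the pair $q^+=\langle t_q,\vec{\pi}^q\cup\{\langle j,\mathrm{id}\rangle\}\rangle$ is a legal extension of $q$ (by the paper's Remark, Fuchs's extra requirement is dropped, so the identity automorphism may always be adjoined), and the truncation $r$ of $q^+$ to any height $\beta+1$ with $\beta$ a limit ordinal below $\alpha^*$ (say $\beta=\alpha_{p_0}$) is a condition \emph{weaker} than $q^+$: its tree is an initial segment of $t_q$, its index set is all of $I_q\cup\{j\}$, and its automorphisms are the restrictions. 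Since the generic filter is upward closed, $q^+\Vdash\check{r}\in\dot{G}$, and $r$ has $\alpha_r<\alpha^*$ and $j\in I_r$. Consequently: (i) the set of indices ``that can be forced below $\alpha_{p_n}$'' is all of $\omega_2$, not a countable set, so it cannot be absorbed into the countable $I_{p_{n+1}}$; (ii) no index can be ``deferred to appear only at heights $\ge\alpha^*$,'' since truncations of later conditions always enter $\dot{G}$; and (iii) in fact \emph{no} condition forces $I_q=\bigcup\{I_r:r\in\dot{G},\ \alpha_r<\alpha^*\}$, because densely many extensions force the reverse inclusion to fail. So no bookkeeping scheme can close this gap: the index clause, read literally over the full upward-closed filter $G$, is unforceable, and your construction proves membership in $\dot{C}$ but not in $\dot{S}$. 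What your attempt actually exposes is that the definition of $S$ must be interpreted more restrictively than its literal reading (the union cannot range over all $r\in G$ of smaller height, since weakenings of arbitrary conditions of $G$ --- truncations carrying their full index sets --- are themselves in $G$); identifying and working with the correct restricted reading, as in \cite{gol}, is precisely the missing content, not a routine detail.
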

Working in $V[G]$ let $\mathbb{Q}$ be the usual forcing notion for adding a club subset of $S$ using countable conditions and let $H$ be $\mathbb{Q}$-generic over $V[G].$ Then (see \cite{j2} Theorem 23.8):
\begin{lemma}
$(a)$ $\mathbb{Q}$ is $\omega_1$-distributive and satisfies the $\omega_2$-$c.c.,$

$(b)$ $C=\bigcup H \subseteq S$ is a club subset of $\omega_1.$

\end{lemma}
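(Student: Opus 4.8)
The plan is to treat $\mathbb{Q}$ as the standard club-shooting forcing, the one place where the particular set $S$ matters being the distributivity argument. Recall that a condition is a closed bounded set $c \subseteq S$ ordered by end-extension ($d \leq c$ iff $c = d \cap (\max(c)+1)$). For the $\omega_2$-c.c.\ I would just count conditions. Since $\mathbb{P}$ is $\omega_1$-closed it adds no reals, so $CH$ (which holds in $V$ by $GCH$) continues to hold in $V[G]$; hence in $V[G]$ there are only $\omega_1^{\omega}=\omega_1$ countable subsets of $\omega_1$, so $\power{\mathbb{Q}} \leq \omega_1 < \omega_2$ and every antichain has size $<\omega_2$. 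This gives the $\omega_2$-c.c.\ immediately.

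For $\omega_1$-distributivity I would show that the intersection of countably many dense open sets is dense. Fix a condition $p$ and dense open sets $\ordered{D_n : n<\omega}$, working in $V[G]$. Choose a large regular $\theta$ and a countable $M \prec H_\theta$ with $p,\mathbb{Q},S,\ordered{D_n:n<\omega} \in M$, and put $\delta = M \cap \omega_1$. The essential point is to arrange $\delta \in S$: the set of ordinals of the form $M'\cap \omega_1$ for such countable $M' \prec H_\theta$ contains a club of $\omega_1$, so, since $S$ is stationary, I may pick $M$ with $\delta = M\cap\omega_1 \in S$. Inside $M$ I would then build a descending chain $p=p_0 \geq p_1 \geq \cdots$ with $p_{n+1}\in D_n \cap M$ and $\sup_n \max(p_n)=\delta$; this is possible by elementarity (each $D_n\in M$ is dense) together with $M\cap\omega_1=\delta$, which lets the tops $\max(p_n)$ be pushed arbitrarily high below $\delta$.

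The \emph{crux}, and the only genuinely non-routine step, is that the limit $q = \set{\delta}\cup \bigunion_n p_n$ is again a condition. It is bounded by $\delta$; it is closed, since any limit point $\beta<\delta$ of $q$ satisfies $\beta < \max(p_n)$ for some $n$, below which the chain is constantly $p_n$, so $\beta$ is a limit point of the closed set $p_n$ and hence $\beta \in p_n$; and it is contained in $S$ precisely because each $p_n \subseteq S$ and, crucially, $\delta \in S$. This last clause is exactly what the careful choice of $M$ bought us, and it is where stationarity of $S$ is indispensable. Since each $D_n$ is open and $q\leq p_{n+1}\in D_n$, we get $q\in\bigintersect_n D_n$ with $q\leq p$, proving density and hence $\omega_1$-distributivity.

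For $(b)$, standard density and genericity arguments suffice. For each $\alpha<\omega_1$ the set of conditions $c$ with $\max(c)>\alpha$ is dense, since $S$ is unbounded and one may extend $c$ by any point of $S$ above $\max(c)$; thus $C=\bigunion H$ is unbounded, and $\omega_1$ is preserved by the distributivity just proved. The conditions in $H$ are pairwise compatible and hence pairwise end-extension-comparable, so $C$ is a coherent increasing union of closed sets: if $\beta<\omega_1$ is a limit point of $C$, choose $c\in H$ with $\max(c)>\beta$, so every element of $C$ below $\beta$ lies in $c$, whence $\beta$ is a limit point of the closed set $c$ and $\beta\in c\subseteq C$. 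Therefore $C$ is closed and unbounded, and $C\subseteq S$ because every condition is a subset of $S$, which is $(b)$.
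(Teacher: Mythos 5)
Your proof is correct. The paper gives no argument for this lemma at all---it simply cites Jech's \emph{Set Theory}, Theorem 23.8, on club-shooting with countable conditions---and what you have written (counting conditions under $CH$ for the $\omega_2$-c.c., and the elementary-submodel argument using stationarity of $S$ to get $\delta = M \cap \omega_1 \in S$ for distributivity) is precisely the standard proof of that cited theorem, so there is nothing to compare beyond noting that your write-up supplies the details the paper outsources.
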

Let
\begin{center}
$\mathbb{R}=\{\langle p, \check{c}  \rangle: p \in \mathbb{P}, p\vdash  \check{c} \in \lusim{\mathbb{Q}}$ and $\max(c)\leq \alpha_p \}.$
\end{center}
Since $\mathbb{P}$ is $\omega_1$-closed, $\mathbb{Q} \subseteq V$ and hence we can easily show that $\mathbb{R}$ is dense in $\mathbb{P}*\lusim{\mathbb{Q}}.$
\begin{lemma}
$T$ remains a Souslin tree in $V[G][H].$
\end{lemma}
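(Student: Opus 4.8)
The plan is to work over $V$ with the forcing $\mathbb{R}$, which by the preceding remarks is dense in $\mathbb{P}*\lusim{\mathbb{Q}}$, so that forcing with $\mathbb{R}$ produces exactly the extension $V[G][H]$; it therefore suffices to show $\Vdash_{\mathbb{R}}``T$ is Souslin''. Since every uncountable antichain extends to an uncountable maximal antichain, and $\omega_1$ is preserved (as $\mathbb{P}$ is $\omega_1$-closed and $\mathbb{Q}$ is $\omega_1$-distributive, so that $T$ is literally the same tree), it is enough to fix an arbitrary $\mathbb{R}$-name $\dot{A}$ and a condition $\langle p_0,\check{c}_0\rangle\Vdash``\dot{A}$ is a maximal antichain of $T$'', and to produce an extension $\langle p,\check{c}\rangle\leq\langle p_0,\check{c}_0\rangle$ together with a countable ordinal $\delta$ such that $\langle p,\check{c}\rangle\Vdash \dot{A}\subseteq T\restriction\delta$.

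Fix a large regular $\theta$ and a countable $N\prec H(\theta)$ containing $\mathbb{R}$, $\dot{A}$, $\langle p_0,\check{c}_0\rangle$, $\dot S$ and all relevant parameters, chosen so that $\delta:=N\cap\omega_1$ is a limit of limit ordinals. Inside $N$ I build a decreasing sequence $\langle\langle p_n,\check{c}_n\rangle:n<\omega\rangle\leq\langle p_0,\check{c}_0\rangle$ that is generic over $N$ for $\mathbb{R}$, meeting every dense subset of $\mathbb{R}$ lying in $N$, with heights $\eta_{p_n}$ increasing cofinally to $\delta$ and index sets $I_{p_n}$ increasing. Using that $\mathbb{P}$ is $\omega_1$-closed \emph{with filling}, the union of the trees $t_{p_n}$ has height $\delta$, which I complete to a normal $\omega$-splitting tree $t$ of height $\delta+1$ by placing on the new top level exactly those cofinal branches of $\bigcup_n t_{p_n}$ determined by the $N$-generic sequence, and I extend each $\pi_\alpha$, $\alpha\in\bigcup_n I_{p_n}$, continuously to level $\delta$. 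Setting $p=\langle t,\langle\pi_\alpha:\alpha\in\bigcup_n I_{p_n}\rangle\rangle$ and $c=\bigcup_n c_n\cup\{\delta\}$, the point $\alpha_p=\delta$ is by construction a closure point, namely $\delta=\bigcup\{\alpha_q:\ldots\}$ and $I_p=\bigcup\{I_q:\ldots\}$, so that $p\Vdash\delta\in\dot S$; hence $c$ is a countable closed subset of $\dot S$ with $\max(c)=\delta=\alpha_p$, and $\langle p,\check{c}\rangle\in\mathbb{R}$ is a legitimate condition below $\langle p_0,\check{c}_0\rangle$.

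It remains to check that $\langle p,\check{c}\rangle$ seals $\dot{A}$ below $\delta$. For every node $x$ of $t$ of height less than $\delta$, the set of conditions deciding some element of $\dot{A}$ comparable with $x$ together with the height of that element is dense below $\langle p_0,\check{c}_0\rangle$ and lies in $N$, so genericity over $N$ yields, for each such $x$, an element of $\dot{A}$ of height below $\delta$ comparable with $x$; moreover, since $\dot{A}$ is forced to be a maximal antichain, meeting $\dot{A}$ is a density requirement along each cofinal branch, so I arrange every branch placed on level $\delta$ to pass through an element of $\dot{A}$. Consequently every node of $T$ of height at least $\delta$ lies strictly above an element of $\dot{A}$ of height below $\delta$, hence cannot itself belong to $\dot{A}$; thus $\langle p,\check{c}\rangle\Vdash\dot{A}\subseteq T\restriction\delta$ and $\dot{A}$ is forced countable. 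I expect the main obstacle to be the construction of the master condition $\langle p,\check{c}\rangle$: one must verify that the filled-in level $\delta$ gives a genuine normal $\omega$-splitting condition of $\mathbb{P}$ (every node below $\delta$ receiving an extension to level $\delta$, with the automorphisms extending coherently), while simultaneously forcing $\delta$ to be a closure point so that $p\Vdash\delta\in\dot S$ and the club coordinate may be capped at $\delta$ inside $\dot S$. This is precisely where the stationarity of $S$ and the definition of $\mathbb{R}$ enter, and it follows the pattern of \cite{f} and of \cite{gol}, Lemma 2.4.
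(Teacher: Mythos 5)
Your proposal follows the same route as the paper: pass to the dense suborder $\mathbb{R}$, build a countable decreasing sequence of conditions (your elementary-submodel bookkeeping is just a repackaging of the paper's explicit fusion, in which a name $\lusim{f}$ for the function sending each node to the least element of $\lusim{A}$ compatible with it is decided on larger and larger trees $t_n$, and the club name $\lusim{C}$ is forced to meet each interval $(\alpha_{p_n},\alpha_{p_{n+1}})$ so that $\bigcup_n c_n\cup\{\delta\}$ is actually closed), then form a master condition whose tree part has a new top level sealing the antichain and whose club coordinate is capped at $\delta$, which is forced into $\lusim{S}$ by the closure-point property. Your concluding argument (every node at or above level $\delta$ sits above a decided element of $\dot{A}$, hence $\dot{A}\subseteq T\upharpoonright\delta$ is countable) is exactly the paper's conclusion $s\vdash\lusim{A}=\check{a}$.

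However, there is a genuine gap at the decisive step, and you have misplaced where the difficulty lies. Level $\delta$ is a limit level, so every automorphism $\pi_\alpha$, $\alpha\in\bigcup_n I_{p_n}$, must be extended to it; for the ``continuous'' extensions to be well-defined bijections of the new level, the family of cofinal branches you install there must be closed under the whole group generated by the $\pi_\alpha$'s and their inverses. This requirement is in direct tension with your sealing requirement: if a branch $b$ passes through a node decided to lie in $\dot{A}$, its image $\pi(b)$ passes through the $\pi$-image of that node, which is in general \emph{not} decided to lie in $\dot{A}$ --- the automorphisms do not fix the name $\dot{A}$. So your observation that ``meeting $\dot{A}$ is a density requirement along each cofinal branch'' lets you seal branches one at a time, but it does not let you seal a family that is closed under the group; naively sealing and then closing under the automorphisms destroys the sealing. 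The repair is a diagonal construction (this is precisely the content of Fuchs' Lemma 2.9, which the paper cites after arranging that $f$ is decided on all of $t$): for each node $x$, build the branch $b_x$ through $x$ so that for every element $\gamma$ of the countable group one pulls back along $\gamma$, at some stage, an extension lying above an $f$-value of the current $\gamma$-image; then every member of the orbit of $b_x$ meets $a=\ran(f\upharpoonright t)$, and the closure of $\{b_x: x\in t\}$ under the group is a countable, normal, group-invariant, fully sealed top level. Your proposal never confronts this orbit problem: you list ``automorphisms extending coherently'' as a verification item and attribute the obstacle to the stationarity of $S$, but stationarity enters only in the club coordinate (forcing $\delta\in\dot{S}$ and keeping the $\max(c_n)$ cofinal in $\delta$); it contributes nothing to the sealing, which is where the real work of this lemma lives.
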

\begin{proof}
We work with $\mathbb{R}$ instead of $\mathbb{P}*\lusim{\mathbb{Q}}.$ Let $\lusim{A}$ be an $\mathbb{R}$-name,  $r_0 \in \mathbb{R}$ and $r_0 \vdash$``$\lusim{A}$ is a maximal antichain in $\lusim{T}$''. Let $\lusim{f}$ be a name for a function that maps each countable ordinal $\alpha$ to the smallest ordinal in $\lusim{A}[G*H]$ compatible with $\alpha$. Then as in \cite{f} we can define a decreasing sequence $\langle r_n: n< \omega  \rangle$ of conditions in $\mathbb{R}$ such that
\begin{itemize}
\item $r_0$ is as defined above,
\item $r_n=\langle p_n, \check{c}_n \rangle = \langle \langle t_n, \vec{\pi}^n  \rangle, \check{c}_n  \rangle,$
\item $\alpha_{p_n}< \alpha_{p_{n+1}},$
\item $r_{n+1}$ decides $\lusim{f} \upharpoonright t_{n},$ say it forces ``$\lusim{f} \upharpoonright t_{n}=\check{f_n}$'',
\item $r_{n+1} \vdash \lusim{C} \cap (\alpha_{p_n}, \alpha_{p_{n+1}}) \neq \emptyset,$

\end{itemize}
Let  $p= \langle t, \vec{\pi}  \rangle$ where $t= \bigcup_{n<\omega}t_{n}, \dom(\vec{\pi})= \bigcup_{n<\omega}\dom(\vec{\pi}^n)$ and for $i\in \dom(\vec{\pi}), \pi_i=\bigcup_{n<\omega}\pi_i^n.$ Let $c=\bigcup_{n<\omega}c_n \cup \{\alpha_p \},$ where $\alpha_p=\sup_{n<\omega}\alpha_{p_n}.$ Then $p \in \mathbb{S},$ but it is not clear that $p \vdash$``$ \check{c}\in\lusim{\mathbb{Q}}$''.

Let $f=\bigcup_{n<\omega}f_n$ and set $a=ran(f\upharpoonright t).$ As in \cite{f}, Lemma 2.9, we can  define a condition $s=\langle q, \check{c} \rangle$ such that
\begin{itemize}
\item $s \in \mathbb{R},$
\item $\eta_q=\alpha_p+1,$ (and hence $\alpha_q=\alpha_p$),
\item $s \vdash$``$\lusim{A} \cap \check{t}$ is a maximal antichain in $\check{t}$'',

\item Every new node  (i.e. every node at the $\alpha_p$-th level) of the tree part of $s$ is above a condition in $a.$
\end{itemize}

It is now clear that $s \vdash \lusim{A}=\check{a},$ and hence $s \vdash$``$ \lusim{A}$ is countable''. The lemma follows.
\end{proof}
From now on we work in $V^*=V[G][H].$ Thus in $V^*$ we have a Souslin tree $T.$ We claim that $T$ is as required. To see this force with $T$ over $V^*$ and let $b$ be a branch of $T$ which is $T$-generic over $V^*$.
\begin{lemma}
In $V^*[b], T$ is an almost Souslin Kurepa tree.
\end{lemma}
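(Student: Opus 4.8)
The plan is to verify separately the two properties making up ``almost Souslin Kurepa'': that $T$ is Kurepa and that $T$ is almost Souslin in $V^*[b]$.

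The Kurepa part is immediate from the earlier lemmas. Since $V[G]\subseteq V^*$ and $b$ is $T$-generic over $V^*$, $b$ is in particular $T$-generic over $V[G]$; thus Lemma 2.2(d) applies and the translates $b_i=\pi_i``b$ $(i<\omega_2)$ are $\omega_2$-many distinct branches of $T$. As $\mathbb P$ and $\mathbb Q$ preserve $\omega_2$ (Lemmas 2.2(a) and 2.4(a)) and the Souslin tree $T$ is ccc as a forcing (Lemma 2.2(b)), we have $\omega_2^{V^*[b]}=\omega_2^{V}$, so these are genuinely $\omega_2$-many branches and $T$ is Kurepa in $V^*[b]$.

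For the almost Souslin part, first note it suffices to treat maximal antichains: any antichain $X$ extends to a maximal antichain $A$, and $S_X\subseteq S_A$, so non-stationarity of $S_A$ yields non-stationarity of $S_X$. Fix then a maximal antichain $A$ in $V^*[b]$, with $T$-name $\lusim{A}\in V^*$. Because $T$ is ccc in $V^*$ it preserves stationary subsets of $\omega_1$, and the club $C=\bigcup H$ remains a club in $V^*[b]$; hence so does its set of limit points $\mathrm{Lim}(C)$. A standard reflection argument (using ccc) produces a club $E\subseteq\omega_1$ in $V^*[b]$ such that for every $\alpha\in E$ the set $A\cap(T\upharpoonright\alpha)$ is a maximal antichain of $T\upharpoonright\alpha$ and is decided by $b\upharpoonright\alpha$. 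I will show $S_A\cap(\mathrm{Lim}(C)\cap E)=\varnothing$; since $\mathrm{Lim}(C)\cap E$ is a club, this makes $S_A$ non-stationary and finishes the argument. The intended mechanism is the sealing used for Souslinity: fix $\alpha\in\mathrm{Lim}(C)\cap E$; as $C\subseteq S$, every $\beta\in C\cap\alpha$ is self-determined, so $T\upharpoonright\alpha=\bigcup_{\beta\in C\cap\alpha}(T\upharpoonright\beta)$ is itself self-determined and the nodes of $T$ at level $\alpha$ are exactly the new nodes added at this stage. As in Lemma 2.5 and \cite{f}, the construction guarantees that each such new node lies above an element of the relevant reflected maximal antichain; consequently, if some $x\in A$ had $ht(x)=\alpha$, then $x$ would lie above some $a\in A\cap(T\upharpoonright\alpha)\subseteq A$, contradicting that $A$ is an antichain.

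The main obstacle is precisely matching the sealing to $A$. In Lemma 2.5 the sealing is arranged against antichains named over $\mathbb R=\mathbb P*\lusim{\mathbb Q}$, that is, against antichains lying in $V^*$, whereas here $A$, and hence the reflection $A\cap(T\upharpoonright\alpha)$, depends on the $T$-generic branch $b$. The real work is therefore to transfer the sealing to this branch-dependent antichain: one must check that the construction of $T$ in \cite{f} can be arranged to seal against $\mathbb P*\lusim{\mathbb Q}*\lusim{T}$-names for maximal antichains, anticipating the later $T$-generic. Concretely, I would show that for club-many self-determined $\alpha$ the initial segment $b\upharpoonright\alpha$ is $(T\upharpoonright\alpha)$-generic over $V^*$ (using that new nodes at self-determined levels are sealed above every $V^*$-maximal antichain of $T\upharpoonright\alpha$, together with the $\omega_1$-distributivity of $\mathbb Q$ to diagonalize over these antichains), so that the reflection $A\cap(T\upharpoonright\alpha)$ is governed by the ground-model sealing at such $\alpha$. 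Granting this transfer, the contradiction above goes through for all $\alpha\in\mathrm{Lim}(C)\cap E$, so $S_A$ misses a club and $T$ is an almost Souslin Kurepa tree in $V^*[b]$.
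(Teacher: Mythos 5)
Your Kurepa half is fine and is exactly the paper's argument: $b$ is $T$-generic over $V[G]$ because it is $T$-generic over $V^*\supseteq V[G]$, so Lemma 2.2(d) applies. The almost Souslin half, however, rests on a mechanism that is not merely unverified (as you yourself flag) but provably false in $V^*[b]$. You want a club of levels $\alpha$ at which every node of $T_\alpha$ lies above an element of $A\cap(T\upharpoonright\alpha)$, and you propose to get it by transferring the sealing of Lemma 2.5 to $T$-names. No such club can exist for the following maximal antichain. For each $\alpha<\omega_1$ let $s_\alpha$ be an immediate successor of $b(\alpha)$ with $s_\alpha\neq b(\alpha+1)$ (recall $T$ is $\omega$-splitting). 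Then $X=\{s_\alpha:\alpha<\omega_1\}$ is an antichain: the strict predecessors of $s_\beta$ are exactly the nodes $b(\gamma)$, $\gamma\leq\beta$, while the only node of $b$ at level $\alpha+1$ is $b(\alpha+1)\neq s_\alpha$. Extend $X$ to a maximal antichain $A$. Then $A\cap b=\emptyset$, since any $b(\gamma)$ lies below $s_\gamma\in A$. Consequently, for \emph{every} $\alpha<\omega_1$ the node $b(\alpha)\in T_\alpha$ lies above no element of $A$ at all: its predecessors are exactly $b\upharpoonright\alpha$, which misses $A$. So your sealing statement fails at every level, not just off a club. The auxiliary claim you would "grant" is equally unavailable: for limit $\alpha$ the set $b\upharpoonright\alpha$ is an element of $V^*$ (it is the set of $<_T$-predecessors of the single ordinal $b(\alpha)$), and a branch lying in $V^*$ is never $(T\upharpoonright\alpha)$-generic over $V^*$, because the set of nodes off that branch is dense and lies in $V^*$. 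This is the heart of the matter: sealing is the device that makes $T$ Souslin in $V^*$; once $b$ is added, $b$ itself manufactures antichains that no level can reflect, so almost Souslinness in $V^*[b]$ must come from a different mechanism.

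The paper's mechanism, which is the idea missing from your proposal, uses the self-determined levels rather than sealing. Working in $V^*[b]$, write each level as $T_\alpha=\{b_i(\alpha):i\in I_p\}$, where $p\in G$ and $\alpha=\alpha_p$. For $\alpha\in C\subseteq S$ one has $I_p=\bigcup\{I_q: q\in G,\ \alpha_q<\alpha_p\}$, so every index $i\in I_p$ has a least entry level $\alpha_q<\alpha$; this permits the definition of a regressive map $g_\alpha$ on the \emph{whole} level, $g_\alpha(b_i(\alpha))=b_i(\alpha_q)<_T b_i(\alpha)$. Almost Souslinness then follows by pressing down, as in \cite{gol}, Lemma 2.6: if $X$ were an antichain with $S_X$ stationary, one could assume $S_X\subseteq C$, choose $x_\alpha\in X\cap T_\alpha$ for $\alpha\in S_X$, and use Fodor's lemma (twice, since levels are countable) to stabilize $g_\alpha(x_\alpha)$ on a stationary set, from which two elements of $X$ are forced to be comparable. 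Note that this argument coexists peacefully with the antichain $A$ above: it never asserts that nodes of $T_\alpha$ sit above elements of the antichain, only that they admit coherent regressive projections along the branches $b_i$. To repair your write-up, the entire sealing/genericity transfer should be replaced by the construction of the maps $g_\alpha$ from the fact that $C\subseteq S$.
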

\begin{proof}
Work in $V^*[b].$ By Lemma 2.2$(d)$ $T$ is a Kurepa tree.
We now show that $T$ is almost Souslin. We may suppose that $T$ is obtained using the branches $b$ and $b_i, i<\omega_2,$ in the sense that for each $\alpha<\omega_1, T_{\alpha},$ the $\alpha$-th level of $T,$ is equal to $\{b(\alpha)\}\cup \{ b_{i}(\alpha): i<\omega_2 \}$ where $b(\alpha)$ ($b_i(\alpha)$) is the unique node in $b\cap T_{\alpha}$ ($b_i \cap T_{\alpha}$). We further suppose that $b=b_0$.

Now let $\alpha \in C,$ and let $p\in G$ be such that $\alpha=\alpha_p.$ We define a function $g_{\alpha}$ on $T_{\alpha}$ as follows. Note that $T_{\alpha}=\{b_i(\alpha): i\in I_p \}.$ Let
\begin{center}

$g_{\alpha}(b_i(\alpha))=b_i(\alpha_q)$
\end{center}
where $q\in G$ is such that $\alpha_q< \alpha$ is the least such that $i\in I_q$ (such a $q$ exists using the fact that $C \subseteq S$).
It is easily seen that $g_{\alpha}$ is well-defined (it does not depend on the choice of $p$), and that for each $x\in T_{\alpha}, g_{\alpha}(x)<_T x.$
The rest of the  proof of the fact that $T$ is almost Souslin is essentially the same as in \cite{gol}, Lemma 2.6.
\end{proof}
This concludes the proof of Theorem 1.2.

School of Mathematics, Institute for Research in
Fundamental Sciences (IPM), P.O. Box: 19395-5746, Tehran-Iran.

e-mail: golshani.m@gmail.com

\end{document}